\documentclass{elsarticle}
\usepackage{mathptmx}       
\usepackage{helvet}         
\usepackage{courier}       
\usepackage{type1cm}
\usepackage{lineno}
\usepackage{hyperref}

\usepackage{amsmath}
\usepackage{bigints}
\usepackage{amsfonts}
\usepackage{commath}
\usepackage{nccmath}
\usepackage{amssymb}

\usepackage{graphicx}
\graphicspath{{./tikzextfigures/}}

\usepackage{amsthm}
\newtheorem{theorem}{Theorem}

\usepackage{thmtools}

\newtheorem{mylem}{Lemma}

\newtheorem{assumption}{Assumption}
\newtheorem{proposition}{Proposition}

\newtheorem{example}{Example}

\newcommand{\F}{\mathcal{F}}

\newcommand{\ito}{It{\^o}}
\renewcommand{\d}{\mbox{d}}
\newcommand{\W}{\mbox{W}}

\newcommand{\N}{\mbox{N}}
\newcommand{\R}{\mathbb{R}}
\newcommand{\E}{\mathbb{E}}
\newcommand{\var}{\mbox{var}}

\newcommand{\h}{\mbox{h}}

\begin{document}
\title{Multilevel Path Simulation to Jump-Diffusion Process with Superlinear Drift}

\author[iut]{Azadeh Ghasemifard}
\ead{azadeh.ghasemi@math.iut.ac.ir}
\author[tmu]{Mahdieh Tahmasebi \corref{cor1}}
\ead{tahmasebi@modares.ac.ir}

\cortext[cor1]{Corresponding author}
\address[iut]{Department of Mathematical Sciences, Isfahan University of Technology, P.O. Box 8415683111, Isfahan, Iran}
\address[tmu]{Department of Mathematical Sciences, Tarbiat Modares University, P.O. Box 14115-134, Tehran, Iran}

\begin{keyword}
multilevel Monte-Carlo \sep one-sided Lipschitz condition \sep jump process \sep strong approximation schemes 
\MSC[2010] 65C30 \sep 65C05 \sep 91G80
\end{keyword}


\begin{abstract}
In this work, we will show strong convergence of the Multilevel Monte-Carlo (MLMC) algorithm with split-step backward Euler (SSBE) and backward (drift-implicit) Euler (BE) schemes for nonlinear jump-diffusion stochastic differential equations (SDEs) when the coefficient drift is globally one-sided Lipschitz and the test function is only locally Lipschitz. We also confirm these theoretical results by numerical experiments for the jump-diffusion processes.
\end{abstract}
\maketitle

\section{Introduction}
Consider a filtered probability space $(\Omega,\F,(\F_t)_{t\geq 0},\mathbb{P})$ and a $d$-dimensional stochastic process $(X_t)_{t\geq 0}$, which satisfies the following stochastic differential equation (SDE)
\begin{equation}\label{eq:SDE}
X_t = X_0 + \int_{t_0}^{t}\mu(X_s-) \d s + \int_{t_0}^{t} \sigma(X_s-)\d \W (s) + \int_{t_0}^{t}  \nu(X_s-) \d \N (s), \quad t_0 \leq t \leq T,
\end{equation} 
where $X_0$ is a known $\R^d$-valued random variable and $\E[X_0^p]<\infty$ for each $p>0$, $\mu: \mathbb{R}^d \to \mathbb{R}^d$ is a superlinear globally one-sided Lipschitz continuous function with polynomial growth property, $\sigma: \mathbb{R}^d \to \mathbb{R}^{d \times m}$ and $\nu: \mathbb{R}^d \to \mathbb{R}^d$ are smooth globally Lipschitz continuous function with polynomial growth condition, $\W(t)$ is the $m$-dimensional Brownian motion and $\N(t)$ is a scalar compensated Poisson process with intensity $\lambda$. Under these smooth properties, the solution of \eqref{eq:SDE} exists and is unique \citep{higham2005numerical}.

In this paper, we are interested in estimation of $\E[f(X_T)]$. This problem typically occurs in option pricing, where $f$ is the given payoff of the underlying asset $X_T$ \cite{black1973pricing, platen2010stochastic, kloeden1992numerical}. Historically, estimating $\mathbb{E}[f(X_T)]$ was split into numerically approximating $X_T$, often by a stochastic one-step method, and approximating the expectation by the Monte-Carlo (MC) method \citep{duffie95emc, glasserman2013monte}. Later the MLMC method, introduced by M. Giles \cite{giles2008multilevel}, become an efficient way to distribute the computational complexity caused by the variance and the bias over a series of levels. The MLMC scheme is a method which uses successive corrections to estimate $\mathbb{E}[f(X_T)]$ in such a way that the mean value is estimated independently for each correction and results a significant reduction of the computational complexity in contrast to the MC method. Consider the numerical approximation $Y_N^l, N=2^l$ of $X_T$ for every level $l=0,\ldots, L$, with the step size ${\h}_l=(T-t_0)/{2^l}$. Also, let $P$ denote a functional of $X_T$, $P= f(X_T)$, and for every $l=0,\ldots, L$, $P_{l}$ be the same functional of $Y_N^l$, $P_l= f(Y_N^l)$. Expanding $\E[P_L]$ into a telescopic sum, we obtain

\begin{equation}
\mathbb{E}[P_L] = \mathbb{E}[P_0] + \sum_{l=1}^{L} \mathbb{E}[P_l - P_{l-1}].
\end{equation}
Rather than taking the real expectation, we apply an approximation $\theta_l$ with $M_l$ paths, e.g.\ the average of i.i.d.\ paths and obtain
\begin{align*}
\theta_0 = \frac{1}{M_0} \sum_{i_0=1}^{M_0} P_0(\omega_{0,i_0}), \quad
\theta_l = \frac{1}{M_l} \sum_{i_l=1}^{M_l} (P_l - P_{l-1})(\omega_{l,i_l}),
\end{align*}
where $\omega_{l,i_l}$ is related to the $i_l$-th path of $Y_N^l$. The optimal $M_l$ is 
\begin{equation}\label{eq:optimal}
M_l = 2\epsilon^{-2} \sqrt{V_l \h_l}\left( \sum_{i=0}^{L}\sqrt{V_i/\h_i} \right),
\end{equation}
where $V_l$ is the variance of $P_l - P_{l-1}$ and $\epsilon$ is the chosen Root Mean Square error (RMSE)  \citep{giles2008multilevel}. This minimizes the computational effort $\sum_{l=0}^{L} M_l/\h_l$ subject to the variance of MLMC estimator be less than ${\varepsilon^2}/{2}$. The benefit of the MLMC approach is that the number of paths needed to estimate the expectation can vary with the variance of $P_l - P_{l-1}$, and as the step size reduces so should the variance. Thus we hope to draw a lot of paths only when each path is cheap, i.e.\ we only do a few time steps per path, while we hope to be able to restrict to a small number of paths where they are expensive, i.e.\ we need to do many time steps.\\
The general theorem by Giles \cite{giles2008multilevel} is as follows.
\begin{proposition} \label{theorem:MLMC}
Let $P$ denote a functional of the solution of the SDE \eqref{eq:SDE}, and let $P_l$ denote the corresponding approximation using step size $h_l$. If there exists independent estimators $\theta_l$ based on $M_l$ Monte-Carlo paths and positive constants $\alpha>0.5$, $\beta,c_1,c_2$ and $c_3$ such that
\begin{enumerate}
\item $|\mathbb{E}[P_l - P]| \leq \h_l^{\alpha}$,
\item
$\E[\theta_l] = \begin{cases}
\E[P_0],\quad  &l=0 \\
\E[P_l]-\E[P_{l-1}],\quad &l>0, \\
\end{cases}$
\item $\var(\theta_l) \leq c_2 M_l^{-1} \h^{\beta}_l$,
\item $C_l$, the computational complexity on each level, is bounded by $C_l \leq c_3 M_l {\h}_l^{-1}$,
\end{enumerate}
then there exists a positive constant $c_4$ such that for any $\epsilon < e^{-1}$, there are values $L$ and $M_l$, for which the multilevel estimator
\begin{equation*}
\theta = \sum_{l=0}^{L} \theta_l
\end{equation*}
has a MSE with bound
\begin{equation*}
MSE := \mathbb{E}[(\theta-\mathbb{E}(P))^2] \leq \epsilon^2
\end{equation*}
with a computational complexity $C$ with bound
\begin{equation*}
C \leq \begin{cases}
c_4 \epsilon^{-2}, & \beta > 1 \\
c_4 \epsilon^{-2} (\log(\epsilon^{-1}))^2, &\beta =1, \\
c_4 \epsilon^{-2-(1-\beta)/\alpha}, & 0<\beta < 1.
\end{cases}
\end{equation*}
\end{proposition}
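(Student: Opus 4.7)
The plan is to decompose the mean-square error as variance plus squared bias, budget $\epsilon^2/2$ to each piece, and then pick $L$ and the sample sizes $M_l$ so that both pieces are controlled while the total cost $C$ is minimised. By the assumed independence of the levels, $\var(\theta)=\sum_{l=0}^{L}\var(\theta_l)$, and Assumption 2 gives the telescoping identity $\E[\theta]=\E[P_L]$, so
$$ MSE(\theta) \;=\; \var(\theta) + \bigl(\E[P_L]-\E[P]\bigr)^2. $$

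First I would dispose of the bias. By Assumption 1, $(\E[P_L]-\E[P])^2\le \h_L^{2\alpha}$, so it suffices to take $L$ to be the smallest integer with $\h_L^{\alpha}\le \epsilon/\sqrt{2}$. Using $\h_l=(T-t_0)2^{-l}$ this gives $L=\alpha^{-1}\log_2(\epsilon^{-1})+O(1)$, and in particular $\h_L^{-1}=O(\epsilon^{-1/\alpha})$.

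Next I would choose the $M_l$ to drive $\var(\theta)\le \epsilon^2/2$ at minimal cost. Writing $V_l\le c_2\h_l^{\beta}$ for the per-sample variance (so that $\var(\theta_l)=V_l/M_l$), a Lagrange-multiplier argument on the relaxed problem of minimising $\sum_l M_l/\h_l$ subject to $\sum_l V_l/M_l=\epsilon^2/2$ yields
$$ M_l \;=\; \Bigl\lceil 2\epsilon^{-2}\sqrt{V_l\h_l}\,\sum_{i=0}^{L}\sqrt{V_i/\h_i}\,\Bigr\rceil, $$
exactly the recipe in \eqref{eq:optimal}. Substituting back and invoking Assumption 4 gives
$$ C \;\le\; c_3\sum_{l=0}^{L}\frac{M_l}{\h_l} \;\le\; 2c_2c_3\,\epsilon^{-2}\Bigl(\sum_{l=0}^{L}\h_l^{(\beta-1)/2}\Bigr)^{2} + c_3\sum_{l=0}^{L}\h_l^{-1}, $$
the last sum arising from the $\lceil\cdot\rceil$ rounding of $M_l$ up to an integer.

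Finally I would split into three regimes according to $\beta$. Setting $r=2^{-(\beta-1)/2}$, the series $\sum_l \h_l^{(\beta-1)/2}$ is geometric with ratio $r$: when $\beta>1$ one has $r<1$ and the series is $O(1)$, yielding $C=O(\epsilon^{-2})$; when $\beta=1$ one has $r=1$ and the series equals $L+1=O(\log\epsilon^{-1})$, yielding $C=O(\epsilon^{-2}(\log\epsilon^{-1})^{2})$; when $\beta<1$ one has $r>1$ and the series is dominated by its last term $\h_L^{(\beta-1)/2}=O(\epsilon^{-(1-\beta)/(2\alpha)})$, yielding $C=O(\epsilon^{-2-(1-\beta)/\alpha})$. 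The rounding overhead $\sum_l \h_l^{-1}=O(\h_L^{-1})=O(\epsilon^{-1/\alpha})$ is subdominant in every regime precisely because $\alpha>1/2$ (in the third regime one additionally uses $\beta/\alpha<2$, which is automatic from $\beta<1<2\alpha$). The main obstacle is handling the integer rounding of $M_l$ cleanly so that neither the variance budget is blown nor the cost estimate loses a factor; the Lagrange step and the three-way case analysis are otherwise routine geometric-series bookkeeping once the bias and variance allocations have been fixed.
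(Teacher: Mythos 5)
Your proposal is correct, and it is essentially the standard argument of Giles (2008): bias--variance splitting of the MSE with an $\epsilon^2/2$ budget for each, choice of $L$ from the weak-rate assumption, Lagrange-optimal $M_l$ matching \eqref{eq:optimal}, and a three-way geometric-series analysis in $\beta$ with the integer-rounding overhead $\sum_l \h_l^{-1}=O(\epsilon^{-1/\alpha})$ absorbed using $\alpha>1/2$. Note that the paper itself gives no proof of this proposition --- it is quoted verbatim as Giles's complexity theorem with a citation --- so there is nothing in the text to diverge from; your reconstruction is the proof the citation points to.
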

Out of the 4 requirements, the difficult one to fulfill is the third. The first one is the weak convergence of order $\alpha$ for the underlying method to approximate $X_t$ based on the step size $h_l$. Constructing estimators with properties (2) and (4) is straightforward since the second one means $\theta_l$ is unbiased and the forth is reasonably about bounding the computational cost of each level. The classical way to accomplish requirement (3) is by using strong order of consistency through the relation $2p_{strong} = \beta$ for Lipschitz continuous functions. In \citep{giles2008multilevel, giles2014antithetic}, the MLMC method is used for SDEs with globally Lipschitz coefficients and test function $f$ with Euler and Milstein schemes respectively. Then in \cite{hutzenthaler2013divergence} for nonlinear SDEs, authors showed that applying MLMC Euler method to approximate $\mathbb{E}[f(X_T)]$ diverges when the test function $f$ is locally Lipschitz continuous with polynomially growth. They also proved their tamed Euler method \citep{hutzenthaler2012strong} is convergent when it is combined with MLMC method. Tamed Euler scheme which is an explicit numerical method, later generalized to the jump-diffusion SDEs in \citep{dareiotis2016tamed}.
Our aim is to overcome this divergence by showing the convergence of the MLMC algorithm when it is applied to the split-step backward Euler (SSBE) and backward Euler (BE) methods for nonlinear jump-diffusion processes. SSBE method first introduced in \citep{higham2002strong} as an implicit method to numerically solve nonlinear diffusion SDEs with one-sided Lipschitz drift. Then it is improved in \citep{bastani2012strong} for discontinuous drifts. This method elegantly generalized to jump-diffusion processes in \citep{higham2005numerical}. They also discussed BE method in \citep{higham2007strong} as a variant of SSBE scheme. Strong convergence of the Euler scheme for SDEs with locally Lipschitz coefficients first discussed in \citep{higham2002strong} for diffusion processes and then modified to jump-diffusion SDEs in \citep{higham2005numerical} and later in \citep{qiao2014euler} with the aid of Hilbert-Schmidt norm and special class of logarithmic coefficients. Also in \citep{chassagneux2016explicit}, the authors have introduced an explicit Euler scheme for diffusion SDEs with locally Lipschitz drift and implementing the MLMC algorithm, they price a few Lipschitz payoffs like spread option. \\
In this paper, we discuss the rate of convergence of the MLMC SSBE method and consider the run time of the method. We show that this method is faster than MLMC tamed Euler scheme. As well we demonstrate that the MLMC SSBE performs much better in some cases which the MLMC tamed Euler fails.

Throughout, we have the following assumptions. 
\begin{assumption}\label{assumption:a}
For some constant $c\in \R$ and positive integer $q$;
\begin{align}
\langle x-y,\mu(x)-\mu(y)\rangle \leq c |x-y|^2,\quad x,y \in \R^d
\end{align}
and
\begin{align}\label{eq:polyDrift}
|\mu(x) - \mu(y)|^2\leq c(1 + |x|^q + |y|^q) |x - y|^2,\quad \forall x,y \in \R^d.
\end{align}
where $\langle \cdot,\cdot \rangle$ denotes the Euclidean scalar product and
\begin{align}
\norm{\sigma(x)-\sigma(y)}^2 \leq c|x-y|^2, \quad |\nu(x)-\nu(y)|^2 \leq c|x-y|^2,\quad x,y \in \R^d
\end{align}
with $\vert\cdot \vert$ denotes the Euclidean vector norm and $\Vert\cdot \Vert$ the Frobenius matrix norm.
\end{assumption}
The plan of the paper is as follows. We discuss the convergence rate of SSBE scheme in section \eqref{sec:Split}, then strong convergence of the MLMC SSBE method is proved in section \eqref{sec:MLMC}. Section \eqref{sec:BE} is devoted to the BE method.
Lastly in section \eqref{sec:Numerics}, we show our theoretical results by numerical examples and compare SSBE and BE methods with the tamed Euler scheme in \citep{hutzenthaler2013divergence}.

\section{Convergence Rate of Split-step Backward Euler Method}\label{sec:Split}
 The Euler-Maruyama scheme to numerically solve SDE \eqref{eq:SDE} reads as
\begin{align}\label{eq:EuJump}
Y_{n+1} = Y_n + \mu(Y_n)\h + \sigma(Y_n)\Delta \W_n + \nu(Y_n)\Delta \N _n, \quad Y_0=X_0, \quad n=1,\ldots, N-1,
\end{align}
where $\h=(T-t_0)/N$ is the time step, $\Delta \W_n=\W(t_{n+1})-\W(t_n)$ and $\Delta \N_n=\N(t_{n+1})-\N(t_n)$ with $t_n=n \h$. \\
In \cite{higham2005numerical}, Higham and Kloeden proved the continuous time solution defined by
\begin{align}\label{eq:ctime}
&{\bar{Y}}_t = X_0 + \int_{t_0}^t \mu(Y_{s-}) \d s + \int_{t_0}^t \sigma(Y_{s-}) \d \W(s) + \int_{t_0}^t \nu(Y_{s-}) \d \N(s),
\end{align}
where $Y(t)=Y_n$ for $t\in [t_n,t_{n+1})$, is strongly convergent of order $.5$ if the moments be bounded.\\
In \citep{higham2005numerical}, the authors have proved the following theorem for $r=2$ but their proof contains some inaccuracies which leads to the wrong order $O(\h^{\frac{1}{2}})$. By removing the flaws, we now prove the theorem for higher moments of Euler approximation. 

\begin{mylem}For $r \geq 2$, the Euler approximation with bounded moments satisfy\label{theorem:Euler}
\begin{align*}
\E \left[\sup_{t_0\leq t \leq T}\abs{X_t - \bar{Y}_t}^r\right]= O(\h^{\frac{r}{2}}).
\end{align*}
\end{mylem}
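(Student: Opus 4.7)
The plan is to apply the jump-diffusion It\^o formula to $\phi(e_t):=|e_t|^r$ with $e_t:=X_t-\bar{Y}_t$, and then close a Gronwall inequality for $\E\sup_{u\le t}|e_u|^r$. Introducing the interpolation error $\eta_t:=\bar{Y}_t-Y_t$, so that $X_{s-}-Y_{s-}=e_{s-}+\eta_{s-}$, is the key to applying the one-sided Lipschitz condition after splitting the drift difference. The argument proceeds in four phases: (i) the It\^o expansion, (ii) term-by-term estimation via Assumption~1 and the uniform $L^p$-bounds on $X$ and $Y$, (iii) BDG on the martingale parts after taking suprema, and (iv) Gronwall.

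Since $r\ge 2$, $\phi\in C^2$ with $\nabla\phi(x)=r|x|^{r-2}x$, and the jump-diffusion It\^o formula yields
\begin{align*}
|e_t|^r = &\ r\!\int_{t_0}^{t}\!|e_{s-}|^{r-2}\langle e_{s-},\mu(X_{s-})-\mu(Y_{s-})\rangle\,ds
+\tfrac{r(r-1)}{2}\!\int_{t_0}^{t}\!|e_{s-}|^{r-2}\|\sigma(X_{s-})-\sigma(Y_{s-})\|^2\,ds\\
&+\lambda\!\int_{t_0}^{t}\!\bigl[|e_{s-}+c_s|^r-|e_{s-}|^r-r|e_{s-}|^{r-2}\langle e_{s-},c_s\rangle\bigr]ds+M_t,
\end{align*}
where $c_s:=\nu(X_{s-})-\nu(Y_{s-})$ and $M_t$ is a local martingale driven by $W$ and the compensated $N$.

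For phase (ii) I would insert $e_{s-}=(X_{s-}-Y_{s-})-\eta_{s-}$ in the drift integrand, so that Assumption~1 yields $\langle X_{s-}-Y_{s-},\mu(X_{s-})-\mu(Y_{s-})\rangle\le 2c(|e_{s-}|^2+|\eta_{s-}|^2)$, while the leftover $\langle\eta_{s-},\mu(X_{s-})-\mu(Y_{s-})\rangle$ is Cauchy--Schwarzed and controlled by \eqref{eq:polyDrift}, with the polynomial factor $(1+|X|^q+|Y|^q)$ tamed by the assumed a priori moment bounds. The diffusion integrand is Lipschitz-bounded in the same $|e|^2+|\eta|^2$ manner, and a second-order Taylor expansion of $\phi$ combined with the Lipschitz bound on $\nu$ dominates the jump-compensator integrand by $C(|e_{s-}|+|\eta_{s-}|)^r$. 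Young's inequality then distributes every cross product $|e_{s-}|^{r-2}|\eta_{s-}|^k$ into an $|e_{s-}|^r$ piece (to be absorbed by Gronwall) and an $|\eta_{s-}|^r$-times-polynomial piece (to be estimated by a one-step computation). In phase (iii) one takes $\sup_{u\le t}$, applies BDG to the Brownian part and Kunita's inequality to the compensated-Poisson part (whose predictable quadratic variations are already controlled by the integrals above), and arrives at
\begin{equation*}
\E\sup_{u\le t}|e_u|^r\le C\!\int_{t_0}^t\!\E\sup_{u\le s}|e_u|^r\,ds+R(\h),
\end{equation*}
which Gronwall closes to give $\E\sup_{u\le T}|e_u|^r\le CR(\h)e^{CT}$.

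The hard part, and precisely the spot where the original Higham--Kloeden argument stumbles, is showing $R(\h)=O(\h^{r/2})$. On each subinterval $[t_n,t_{n+1}]$ one has $\eta_s=\mu(Y_n)(s-t_n)+\sigma(Y_n)(W_s-W_{t_n})+\nu(Y_n)(N_s-N_{t_n})$; the drift and Brownian pieces straightforwardly contribute the expected $O(\h^{r/2})$ in $L^r$, but a naive pointwise estimate on the compensated Poisson increment gives only $O(\h^{1/r})$, because $\E|N_s-N_{t_n}|^r$ is of order $\h$ rather than $\h^{r/2}$. Recovering the sharp rate requires a more careful routing of the Poisson contribution -- for instance, keeping it inside the It\^o jump-martingale where BDG via the predictable quadratic variation $\lambda\!\int\!|c_s|^2 ds$ scales correctly, combined with a H\"older argument against the polynomial moment bounds of $X$ and $Y$ and the observation that the aggregate Lebesgue measure of the subintervals on which a Poisson jump has already occurred is itself $O(\h)$. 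Once this subtlety is resolved, Gronwall delivers the claimed $O(\h^{r/2})$.
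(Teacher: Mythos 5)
Your architecture matches the paper's: form the error SDE, apply It\^o, use the one-sided Lipschitz condition after a decomposition that isolates the interpolation error $\eta=\bar Y-Y$, then BDG and Gr\"onwall. Two structural differences: the paper applies It\^o to $|e|^2$ and only afterwards raises to the power $r/2$, so every jump contribution stays at the level of $\int|\cdot|^2\,\d s$ raised to $r/2$, whereas your It\^o expansion of $|e|^r$ produces the compensator term $\lambda\int\big(|e+c|^r-|e|^r-r|e|^{r-2}\langle e,c\rangle\big)\d s\lesssim\int\big(|e|^{r-2}|c|^2+|c|^r\big)\d s$; and the paper splits the drift as $\mu(X)-\mu(Y)=[\mu(X)-\mu(\bar Y)]+[\mu(\bar Y)-\mu(Y)]$ and pairs the first bracket with $e=X-\bar Y$, whereas you substitute $e=(X-Y)-\eta$ in the first slot. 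Both decompositions are workable and equivalent in substance.

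The genuine gap is exactly the step you defer to the end: showing $R(\h)=O(\h^{r/2})$. Your first suggested fix --- BDG via the predictable quadratic variation $\lambda\int|c_s|^2\,\d s$ --- is not available for moment exponents above $2$: BDG with the predictable bracket holds only for exponents in $[1,2]$, and for exponent $r>2$ the correct (Kunita/Bichteler--Jacod) inequality carries an additional term $\lambda\,\E\int|c_s|^r\,\d s$, whose $\eta$-contribution is of order $\h$, not $\h^{r/2}$. Your second observation (the aggregate Lebesgue measure of the post-jump portions of the subintervals is $O(\h)$ times the jump count) is correct and does repair the terms of the form $\E\big[(\int(1+|\bar Y|^q+|Y|^q)|\eta_s|^2\,\d s)^{r/2}\big]$, but it does not repair the $\int|c_s|^r\,\d s$ compensator term generated by your own It\^o expansion, nor the jump martingale $\int\eta_{s-}\,\d\N(s)$: both are genuinely only $O(\h)$ in the $r$-th moment, being driven by the event that some step contains at least two Poisson jumps, which has probability of order $\h$ and on which the one-step error is of order one. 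Indeed, for $\mu=\sigma=0$, $\nu(x)=x$, $X_0=1$ one checks directly that $\E|X_T-\bar Y_T|^r\geq c\,\h$ on precisely this event, so no rerouting of the estimates can yield $O(\h^{r/2})$ for $r>2$. For what it is worth, the paper's own proof founders at the same spot: inequality \eqref{eq:poisInc} is obtained by bounding $\E\big[\sup_s|\int_{t_n}^s\nu(Y_{u-})\,\d\N(u)|^r\big]$ by $K_p\,\E\big[(\int_{t_n}^{t_{n+1}}|\nu(Y_{u-})|^2\,\d u)^{r/2}\big]$, i.e.\ by applying the Gaussian form of BDG to a compensated Poisson integral, and over a maximizing index $n^\star$ that is not a stopping time. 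You have located the real obstruction, but neither your sketch nor the paper closes it, and for $r>2$ it cannot be closed without additional structure (e.g.\ a jump-adapted grid).
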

\begin{proof}
First, for arbitrary $t_0\leq t\leq T$, we know that

\begin{align*}
e(t) = X_t - {\bar{Y}}_t &= \int_{t_0}^t (\mu(X_{s-})-\mu(Y_{s-})) \d s + \int_{t_0}^t (\sigma(X_{s-})-\sigma(Y_{s-})) \d \W(s)\\ 
&+ \int_{t_0}^t (\nu(X_{s-})-\nu(Y_{s-})) \d \N(s).
\end{align*}
Applying the \ito\ formula, we have 
\begin{align*}
\abs{e(t)}^2 &= \int_{t_0}^t \Big(2 \langle\mu(X_{s-})-\mu(Y_{s-}),e(s^-)\rangle +\norm{\sigma(X_{s-})-\sigma(Y_{s-})}^2\Big) \d s + M(t),
\end{align*}
where
\begin{align*}
M(t) &= \int_{t_0}^t 2 \langle\sigma(X_{s-})-\sigma(Y_{s-}),e(s^-)\rangle \d \W(s) \\
&+ \int_{t_0}^t \Big(2 \langle\nu(X_{s-})-\nu(Y_{s-}),e(s^-)\rangle 
+\abs{\nu(X_{s-})-\nu(Y_{s-})}^2\Big) \d \N(s),
\end{align*}
is a martingale. Then
\begin{align*}
\abs{e(t)}^2 &= \int_{t_0}^t \Big(2\langle\mu(X_{s-})-\mu(\bar{Y}_{s-}),e(s^-)\rangle +\norm{\sigma(X_{s-})-\sigma(Y_{s-})}^2\Big) \d s \\
&+ \int_{t_0}^t 2\langle\mu(\bar{Y}_{s-})-\mu(Y_{s-}),e(s^-)\rangle \d s + M(t).
\end{align*}
Using Assumption (1) and the fact that $(a+b)^p\leq 2^{p-1}(a^p+b^p)$, for every $p>0$ and $a,b\geq 0$, we obtain
\begin{align*}
\abs{e(t)}^2 &\leq \int_{t_0}^t \Big(2 c \abs{e(s^-)}^{2} + 2 c \abs{e(s^-)}^{2} + 2 c \abs{\bar{Y}_{s-}-Y_{s-}}^2\Big)\d s\\
&+ \int_{t_0}^t \Big(\abs{e(s^-)}^{2} + \abs{\mu(\bar{Y}_{s-})-\mu(Y_{s-})}^2\Big) \d s + \abs{M(t)}.
\end{align*}
From equation \eqref{eq:polyDrift} and for some constant $K$, we have
\begin{align}\label{eq:e1}
\nonumber \abs{e(t)}^r \leq 3^{{\frac{r}{2}}-1} \bigg( \bigg[(4c+1) & \int_{t_0}^t \abs{e(s^-)}^{2} \d s\bigg]^{\frac{r}{2}} + K \bigg[ \int_{t_0}^t (1+\abs{\bar{Y}_{s-}}^q+\abs{Y_{s-}}^q)\abs{\bar{Y}_{s-}-Y_{s-}}^2 \d s \bigg]^{\frac{r}{2}} \\
&+ \abs{M(t)}^{\frac{r}{2}}\bigg).
\end{align}
Also, for $s\in [n\h,(n+1)\h)$, we can calculate
\begin{align*}
\abs{\bar{Y}_{s-}-Y_{s-}}^r &\leq \abs{\int_{t_n}^{s} \mu(Y_{u-}) \d u + \int_{t_n}^{s} \sigma(Y_{u-}) \d \W(u) + \int_{t_n}^{s} \nu(Y_{u-}) \d \N(u)}^r\\
&\leq 3^{r-1}\left(\abs{\int_{t_n}^{s} \mu(Y_{u-}) \d u}^r + \abs{ \int_{t_n}^{s} \sigma(Y_{u-}) \d \W(u)}^r + 
\abs{\int_{t_n}^{s} \nu(Y_{u-}) \d \N(u)}^r\right),
\end{align*}
then for $t_{n^{\prime}} \leq t \leq t_{n^{\prime}+1}$
\begin{align}\label{eq:e2}
\nonumber \sup_{t_0\leq s \leq t}\abs{\bar{Y}_{s-}-Y_{s-}}^r &= \sup_{0\leq n \leq n^{\prime}}\sup_{t_n\leq s \leq t_{n+1}}\abs{\bar{Y}_{s-}-Y_{s-}}^r \\
\nonumber &\leq3^{r-1}\bigg( \sup_{0\leq n \leq n^{\prime}}\sup_{t_n\leq s \leq t_{n+1}} \int_{t_n}^{s} \abs{\mu(Y_{u-})}^r \d u + \sup_{0\leq n \leq n^{\prime}}\sup_{t_n\leq s \leq t_{n+1}}\abs{ \int_{t_n}^{s} \sigma(Y_{u-}) \d \W(u)}^r \\ 
\nonumber &+\sup_{0\leq n \leq n^{\prime}}\sup_{t_n\leq s \leq t_{n+1}}\abs{\int_{t_n}^{s} \nu(Y_{u-}) \d \N(u)}^r\bigg)\\
\nonumber &\leq3^{r-1}\bigg( \h^r \sup_{0\leq n \leq n^{\prime}}\abs{\mu(Y_{t_n-})}^r + \sup_{t_{n_1^{\star}}\leq s \leq t_{n_1^\star+1}}\abs{ \int_{t_{n^{\star}_1}}^{s} \sigma(Y_{u-}) \d \W(u)}^r \\ 
&+\sup_{t_{n^{\star}_2}\leq s \leq t_{n_2^\star+1}}\abs{\int_{t_{n^{\star}_2}}^{s} \nu(Y_{u-}) \d \N(u)}^r\bigg).
\end{align}
which two last supremum in the second inequality is taken in $n^{\star}_1$ and $n^{\star}_2$, respectively.
Taking the expectation of equation \eqref{eq:e2}, and applying Burkholder-Davis-Gundy inequity \citep{dellacherie1980probabilites,kunita1997stochastic} results
\begin{align*}
\E \left[ \sup_{t_0\leq s \leq t} \abs{\bar{Y}_{s-}-Y_{s-}}^r \right]&\leq 3^{r-1}\bigg(\h^r \E \bigg[\sup_{0\leq n \leq n^{\prime}}\abs{\mu(Y_{t_n-})}^r\bigg] + K_p \E \bigg[\int_{t_{n_1^\star}}^{t_{n_1^\star+1}}\abs{\sigma(Y_{u-})}^2 \d u\bigg]^{\frac{r}{2}}\\ 
&+ K_p \E\bigg[\int_{t_{n_2^\star}}^{t_{n_2^\star+1}}\abs{\nu(Y_{u-})}^2 \d u\bigg]^{\frac{r}{2}}\bigg)\\
&\leq 3^{r-1}\bigg(\h^r \E \left[\sup_{0\leq n \leq n^{\prime}}\abs{\mu(Y_{t_n-})}^r\right] 
+ K_p\h^{\frac{r}{2}} \E \left[\sup_{0\leq n \leq n^{\prime}}\abs{\sigma(Y_{t_n-})}^r\right]\\
&+ K_p \h^{\frac{r}{2}} \E \left[\sup_{0\leq n \leq n^{\prime}}\abs{\nu(Y_{t_n-})}^r\right]\bigg),
\end{align*}
for some constant $K_p$. Then using the polynomial growth of $\mu, \sigma$ and $\nu$ along with the bounded moments of $Y$, we have
\begin{align}\label{eq:poisInc}
\E \left[ \sup_{t_0\leq s \leq t} \abs{\bar{Y}_{s-}-Y_{s-}}^r \right]\leq K\left(\h^r + \h^{\frac{r}{2}} + \h^{\frac{r}{2}}\right)=O(\h^{\frac{r}{2}}).
\end{align}
Now, equation \eqref{eq:poisInc} and the Cauchy–Schwarz inequality to equation \eqref{eq:e1} results

\begin{align}\label{eq:sup}
\nonumber\E\left[\sup_{t_0\leq s \leq t} \abs{e(s)}^r\right] &\leq 3^{{\frac{r}{2}}-1} \bigg((4c+1)(t-t_0)^{\frac{r}{2}-1} \int_{t_0}^t \E\left[\abs{e(s^-)}^{r}\right] \d s \\
\nonumber&+ K(t-t_0)^{\frac{r}{2}-1} \int_{t_0}^t \Big(\E\left[\abs{\bar{Y}_{s-}-Y_{s-}}^{2r}\right]\E\left[(1+\abs{\bar{Y}_{s-}}^q+\abs{Y_{s-}}^q)^r\right]\Big)^{\frac{1}{2}}\d s\\ 
\nonumber&+ \E\left[\sup_{t_0\leq s \leq t} \abs{M(s)}^{\frac{r}{2}}\right]\bigg)\\
\nonumber &\leq 3^{{\frac{r}{2}}-1} K \bigg( \int_{t_0}^t \E\left[\abs{e(s^-)}^{r}\right] \d s \\
\nonumber &+ \bigg(\E\bigg[\sup_{t_0\leq s \leq t}\abs{\bar{Y}_{s-}-Y_{s-}}^{2r}\bigg]\bigg)^{\frac{1}{2}}\int_{t_0}^t \Big(\E\left[(1+\abs{\bar{Y}_{s-}}^q+\abs{Y_{s-}}^q)^r\right]\Big)^{\frac{1}{2}}\d s\\ 
\nonumber&+ \E\left[\sup_{t_0\leq s \leq t} \abs{M(s)}^{\frac{r}{2}}\right]\bigg)\\
\nonumber& \leq 3^{{\frac{r}{2}}-1}K \bigg(\int_{t_0}^t \E\left[\abs{e(s^-)}^{r}\right] \d s + \h^{\frac{r}{2}} 3^{r-1}\int_{t_0}^t \E\left[(1+\abs{\bar{Y}_{s-}}^{rq}+\abs{Y_{s-}}^{rq})\right]\d s \\ 
\nonumber &+ \E\left[\sup_{t_0\leq s \leq t} \abs{M(s)}^{\frac{r}{2}}\right]\bigg)\\
&\leq K \int_{t_0}^t \E\left[\abs{e(s^-)}^{r}\right] \d s + K \h^{\frac{r}{2}}+ \E\left[\sup_{t_0\leq s \leq t} \abs{M(s)}^{\frac{r}{2}}\right],
\end{align}
where we have used the inequality $\sqrt{1+x}\leq 1+x,$ for $x\geq 0$ in the last step.\\
 To bound the martingale term $M(t)$, applying the triangle inequality, Burkholder-Davis-Gundy inequality and then Young inequality \eqref{eq:Young1} with $p=2$ and $\delta=\dfrac{1}{4 K_p 3^{\frac{r}{2}-1}}$, we derive

\begin{align*}
\E\left[\sup_{t_0\leq s \leq t} \abs{M(s)}^{\frac{r}{2}}\right]&\leq 2 K_p 3^{\frac{r}{2}-1} \E\bigg[ \int_{t_0}^t \abs{e(s^-)}^{2} \norm{\sigma(X_{s-})-\sigma(Y_{s-})}^2 \d s\bigg]^{\frac{r}{4}} \\
&+K_p 3^{\frac{r}{2}-1}\bigg(2 \E\bigg[ \int_{t_0}^t \abs{e(s^-)}^{2} \abs{\nu(X_{s-})-\nu(Y_{s-})}^2 \d s\bigg]^{\frac{r}{4}} \\
&+ \E\bigg[\int_{t_0}^t \abs{\nu(X_{s-})-\nu(Y_{s-})}^4 \d s\bigg]^{\frac{r}{4}}\bigg)
\end{align*}

\begin{align*}
&\leq 2 K_p 3^{\frac{r}{2}-1}\E \bigg[\sup_{t_0\leq s \leq t} \abs{e(s^-)}^{2}\int_{t_0}^t \norm{\sigma(X_{s-})-\sigma(Y_{s-})}^2 \d s\bigg]^{\frac{r}{4}}\\
&+K_p 3^{\frac{r}{2}-1}\bigg(2 \E\bigg[ \sup_{t_0\leq s \leq t} \abs{e(s^-)}^{2} \int_{t_0}^t \abs{\nu(X_{s-})-\nu(Y_{s-})}^2 \d s\bigg]^{\frac{r}{4}} \\ 
&+ \E\bigg[\int_{t_0}^t \abs{\nu(X_{s-})-\nu(Y_{s-})}^4 \d s\bigg]^{\frac{r}{4}}\bigg) \\
&\leq \frac{1}{4}\E\left[ \sup_{t_0\leq s \leq t}\abs{e(s^-)}^{r}\right] +  \frac{1}{2\delta}(t-t_0)^{\frac{r}{2}-1}\E \left[\int_{t_0}^t \norm{\sigma(X_{s-})-\sigma(Y_{s-})}^r \d s\right]\\
&+ \frac{1}{4}\E\left[ \sup_{t_0\leq s \leq t}\abs{e(s^-)}^{r}\right] +  \frac{1}{2\delta}(t-t_0)^{\frac{r}{2}-1}\E \left[\int_{t_0}^t \abs{\nu(X_{s-})-\nu(Y_{s-})}^r \d s\right]\\
&+ K_p 3^{\frac{r}{2}-1} (t-t_0)^{\frac{r}{4}-1}\E\bigg[\int_{t_0}^t \abs{\nu(X_{s-})-\nu(Y_{s-})}^r \d s\bigg].
\end{align*}
where $r\geq 4$. Also note that
\begin{align*}
\E \left[\int_{t_0}^t \norm{\sigma(X_{s-})-\sigma(Y_{s-})}^r \d s\right]\leq c2^{r-1}\int_{t_0}^t \left(\E \left[\abs{e(s^-)}^{r}\right]+\E \left[\abs{\bar{Y}_{s-}-Y_{s-}}^r\right]\right) \d s,
\end{align*}
and
\begin{align*}
\E \left[\int_{t_0}^t \abs{\nu(X_{s-})-\nu(Y_{s-})}^r \d s\right]\leq c2^{r-1}\int_{t_0}^t \left(\E \left[\abs{e(s^-)}^{r}\right]+\E \left[\abs{\bar{Y}_{s-}-Y_{s-}}^r\right]\right) \d s,
\end{align*}
which gives
\begin{align}\label{eq:martigale}
\E\left[ \sup_{t_0\leq s \leq t} \abs{M(s)}^{\frac{r}{2}}\right]&\leq \frac{1}{2}\E\left[ \sup_{t_0\leq s \leq t}\abs{e(s^-)}^{r}\right] + K \int_{t_0}^t \E\left[\abs{e(s^-)}^{r}\right]\d s + K \h^{\frac{r}{2}}.
\end{align}
Substituting equation \eqref{eq:martigale} in \eqref{eq:sup}, implies that
\begin{align*}
\E\left[ \sup_{t_0\leq s \leq t} \abs{e(s^-)}^{r}\right]&\leq  K \int_{t_0}^t \E\left[ \sup_{t_0\leq u \leq s}\abs{e(u^-)}^{r}\right]\d s + O(\h^{\frac{r}{2}}).
\end{align*}
Now the Gr\"{o}nwall inequality completes the proof for $r\geq 4$. By H\"{o}lder inequality the result also holds for $r\geq 2$.
\end{proof}

The authors \citep{higham2005numerical} also defined the split-step backward Euler (SSBE) method for SDE \eqref{eq:SDE} as $Z_0 = X_0$ and
\begin{align}\label{eq:SSBEJump}
\begin{aligned}
&Z_n^* = Z_n + \mu(Z_n^*)\h,\\
&Z_{n+1} = Z_n^* + \sigma(Z_n^*)\Delta \W_n + h(Z_n^*)\Delta \N _n.
\end{aligned}
\end{align}
Interestingly, they proved that SSBE scheme is equivalent to the explicit Euler–Maruyama method applied to the perturbed SDE 
\begin{equation}\label{eq:perSDE}
X_t^h = X^h_0 + \int_{t_0}^{t}\mu_h(X^h_{s-}) \d s + \int_{t_0}^{t} \sigma_h(X^h_{s-})\d \W (s) + \int_{t_0}^{t}  \nu_h(X^h_{s-}) \d \N (s),\quad X^h_0=X_0, 
\end{equation}
where the coefficient functions defined by an auxiliary function $F_h:\R^d \rightarrow\R^d$, as $F_h(x)=y$ such that $y$ be the unique solution of $y=x+\mu(y)\mbox{h}$, and
\begin{align*}
\mu_h(x)=\mu(F_h(x)), \quad \sigma_h(x)=\sigma(F_h(x)), \quad \nu_h(x)=\nu(F_h(x)).
\end{align*}
Therefore, the continuous time approximation of SSBE reads as
\begin{align*}
&{\bar{Z}}_t = X_0^h + \int_{t_0}^t \mu_h(Z_{s-}) \d s + \int_{t_0}^t \sigma_h(Z_{s-}) \d \W(s) + \int_{t_0}^t \nu_h(Z_{s-}) \d \N(s),
\end{align*}
where $Z(t)=Z_n$ for $t\in [t_n,t_{n+1})$. It has been shown that $\mu_h, \sigma_h, \nu_h$ satisfy in similar inequalities to $\mu, \sigma, \nu$ and suitably are exact in the way that
\begin{align}\label{eq:exact}
\max\big\{ \abs{\mu(x)-\mu_h(x)}^2, \norm{\sigma(x)-\sigma_h(x)}^2, \abs{\nu(x)-\nu_h(x)}^2\big\}\leq c(1+\abs{x}^{q^\prime}) \h^2,
\end{align}
where $q^\prime$ is a positive integer.\\
Next theorem compares the solutions of SDEs \eqref{eq:SDE} and \eqref{eq:perSDE}.

\begin{mylem}For $r \geq 2$,\label{theorem:solution}
\begin{align*}
\E \left[\sup_{t_0\leq t \leq T}\abs{X_t - X^h_t}^r\right]= O(\h^r).
\end{align*}
\end{mylem}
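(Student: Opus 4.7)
The plan is to mirror the strategy of Lemma \ref{theorem:Euler}, but now comparing the two genuine SDE solutions $X$ and $X^h$ instead of an SDE and its piecewise-constant Euler interpolant. Set $e(t)=X_t-X_t^h$ and apply \ito's formula for jump-diffusions to $\abs{e(t)}^2$. For the drift, split
\begin{align*}
\mu(X_{s-})-\mu_h(X^h_{s-}) = \bigl[\mu(X_{s-})-\mu(X^h_{s-})\bigr] + \bigl[\mu(X^h_{s-})-\mu_h(X^h_{s-})\bigr].
\end{align*}
The first bracket is controlled via the one-sided Lipschitz condition of Assumption \ref{assumption:a}, yielding $2\langle\mu(X_{s-})-\mu(X^h_{s-}),e(s^-)\rangle\leq 2c\abs{e(s^-)}^2$; the second bracket is a pure perturbation term, handled by Young's inequality together with \eqref{eq:exact}, giving $2\langle\mu(X^h_{s-})-\mu_h(X^h_{s-}),e(s^-)\rangle\leq\abs{e(s^-)}^2+c(1+\abs{X^h_{s-}}^{q'})\h^2$. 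Analogous decompositions for $\sigma(X_{s-})-\sigma_h(X^h_{s-})$ and $\nu(X_{s-})-\nu_h(X^h_{s-})$ use the global Lipschitz property on the first summand and \eqref{eq:exact} on the second, so every quadratic-variation and compensator contribution falls into the pattern $K\abs{e(s^-)}^2+K(1+\abs{X^h_{s-}}^{q'})\h^2$, plus stochastic integrals lumped into a martingale $M(t)$.

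Raising the resulting integrated bound to the power $r/2$, applying Hölder to pull the time integral outside, then taking $\sup_{t_0\leq s\leq t}$ and expectation, I obtain an estimate of the shape
\begin{align*}
\E\!\left[\sup_{t_0\leq s\leq t}\abs{e(s^-)}^r\right] \leq K\!\int_{t_0}^t\E\!\left[\sup_{t_0\leq u\leq s}\abs{e(u^-)}^r\right]\d s + K\h^r\!\int_{t_0}^t\E\!\left[(1+\abs{X^h_{s-}}^{q'})^{r/2}\right]\d s + \E\!\left[\sup_{t_0\leq s\leq t}\abs{M(s)}^{r/2}\right].
\end{align*}
The perturbation integral is $O(\h^r)$ because $X^h$ has bounded moments of every order (this is the reason the assumption of bounded moments carries over to the SSBE analysis in \citep{higham2005numerical}). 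The martingale term is bounded exactly as in Lemma \ref{theorem:Euler}: by Burkholder–Davis–Gundy followed by Young's inequality with a small parameter, one absorbs $\tfrac12\E[\sup_s\abs{e(s^-)}^r]$ into the left side while the remaining integrands split, via Lipschitz + perturbation, into another $\int_{t_0}^t\E[\abs{e(s^-)}^r]\d s$ term plus a further $O(\h^r)$ contribution. Gr\"onwall's inequality then closes the estimate for $r\geq 4$, and the range $2\leq r<4$ follows by H\"older.

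The main technical obstacle is the same absorbing trick that appeared in Lemma \ref{theorem:Euler}: one must keep careful track of which constants multiply the $\sup_s\abs{e(s^-)}^r$ term arising from BDG applied to the jump compensator, since the jump integrand now contains a $\abs{\nu(X_{s-})-\nu_h(X^h_{s-})}^4$ piece that must be reduced to $\abs{\nu(X_{s-})-\nu_h(X^h_{s-})}^r$ and then split into a Lipschitz part (controlled by $\abs{e(s^-)}^r$) and a perturbation part (controlled by $\h^r$ via \eqref{eq:exact}). The rest of the argument is essentially bookkeeping, and the $h^r$ scaling — as opposed to the $h^{r/2}$ scaling of Lemma \ref{theorem:Euler} — is a direct consequence of \eqref{eq:exact} delivering an $\h^2$-order pointwise perturbation rather than the $\h$-order Brownian increment that governed the Euler case.
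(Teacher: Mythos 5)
Your proposal follows essentially the same route as the paper's own proof: the identical splitting $\mu(X_{s-})-\mu_h(X^h_{s-})=[\mu(X_{s-})-\mu(X^h_{s-})]+[\mu(X^h_{s-})-\mu_h(X^h_{s-})]$ handled by the one-sided Lipschitz condition plus \eqref{eq:exact}, the same $O(\h^r)$ perturbation term controlled by the bounded moments of $X^h$, the same Burkholder--Davis--Gundy/Young absorption of $\tfrac12\E[\sup_s\abs{e(s^-)}^r]$ for the martingale, and the same Gr\"onwall-then-H\"older closing step for $r\geq 4$ and $2\leq r<4$. The argument is correct and matches the paper's proof in all essentials.
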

\begin{proof}
We know that
\begin{align*}
e(t) = X_t - X^h_t &= \int_{t_0}^t (\mu(X_{s-})-\mu_h(X^h_s)) \d s + \int_{t_0}^t (\sigma(X_{s-})-\sigma_h(X^h_s)) \d \W(s) \\ &+ \int_{t_0}^t (\nu(X_{s-})-\nu_h(X^h_s)) \d \N(s).
\end{align*}
For arbitrary $t_0\leq t\leq T$ and using the \ito\ formula, we have
\begin{align*}
\abs{e(t)}^2 &= \int_{t_0}^t \Big(2 \langle\mu(X_{s-})-\mu_h(X^h_{s-}),e(s^-)\rangle +\norm{\sigma(X_{s-})-\sigma_h(X^h_{s-})}^2\Big) \d s + M(t),
\end{align*}
where
\begin{align*}
M(t) &= \int_{t_0}^t 2 \langle\sigma(X_{s-})-\sigma_h(X^h_{s-}),e(s^-)\rangle \d \W(s) 
+ \int_{t_0}^t \Big(2 \langle\nu(X_{s-})-\nu_h(X^h_{s-}),e(s^-)\rangle \\ 
&+\abs{\nu(X_{s-})-\nu_h(X^h_{s-})}^2\Big) \d \N(s),
\end{align*}
is a martingale.
Then
\begin{align*}
\abs{e(t)}^2 &= \int_{t_0}^t \Big(2 \langle\mu(X_{s-})-\mu(X^h_{s-}),e(s^-)\rangle +\norm{\sigma(X_{s-})-\sigma(X^h_{s-})}^2\Big) \d s \\
&+ \int_{t_0}^t \Big(2 \langle\mu(X^h_{s-})-\mu_h(X^h_{s-}),e(s^-)\rangle +\norm{\sigma(X^h_{s-})-\sigma_h(X^h_{s-})}^2 \Big)\d s + M(t).
\end{align*}
Using Assumption (1), we obtain
\begin{align*}
\abs{e(t)}^2 &\leq \int_{t_0}^t \Big(2 c \abs{e(s^-)}^{2} + 2 c \abs{e(s^-)}^{2}\Big) \d s + \int_{t_0}^t \Big(\abs{e(s^-)}^{2} + \abs{\mu(X^h_{s-})-\mu_h(X^h_{s-})}^2 \\
&+ \norm{\sigma(X^h_{s-})-\sigma_h(X^h_{s-})}^2\Big) \d s + \abs{M(t)}.
\end{align*}
Hence equation \eqref{eq:exact} and the inequality $(a+b)^p\leq 2^{p-1}(a^p+b^p)$, for every $p>0$ and $a,b\geq 0$ result
\begin{align}\label{eq:e3}
\begin{aligned}
\abs{e(t)}^r &\leq \bigg[ \int_{t_0}^t \Big(2 c \abs{e(s^-)}^{2} + 2 c \abs{e(s^-)}^{2}\Big) \d s + \int_{t_0}^t \Big(\abs{e(s^-)}^{2} + 2c(1+\abs{X^h_{s-}}^{q^\prime})\h^2 \Big) \d s + \abs{M(t)}\bigg]^{\frac{r}{2}}\\
&\leq 3^{{\frac{r}{2}}-1} \bigg( \bigg[(4c+1) \int_{t_0}^t \abs{e(s^-)}^{2} \d s\bigg]^{\frac{r}{2}} + \bigg[2 c \int_{t_0}^t (1+\abs{X^h_{s-}}^{q^\prime})\h^2 \d s \bigg]^{\frac{r}{2}} + \abs{M(t)}^{\frac{r}{2}}\bigg).
\end{aligned}
\end{align}
Taking the supermum and then expectation of \eqref{eq:e3} and boundedness of the moments \citep[Corollary 1]{higham2005numerical}, we derive for some constant $K$
\begin{align}\label{eq:sup1}
\nonumber \E\left[\sup_{t_0\leq s \leq t}\abs{e(s)}^r\right] &\leq 3^{{\frac{r}{2}}-1} \bigg((4c+1)^{\frac{r}{2}} (t-t_0)^{{\frac{r}{2}}-1} \int_{t_0}^t \E\left[\abs{e(s^-)}^{r}\right] \d s
+ K \h^r + \E\left[\sup_{t_0\leq s \leq t} \abs{M(s)}^{\frac{r}{2}}\right]\bigg)\\
&\leq K \int_{t_0}^t \E\left[\abs{e(s^-)}^{r}\right] \d s + K \h^r + \E\left[\sup_{t_0\leq s \leq t} \abs{M(s)}^{\frac{r}{2}}\right].
\end{align}
On the other hand, from the triangle, Burkholder–Davis–Gundy and Young inequality \eqref{eq:Young1} with $p=2$ and $\delta=\dfrac{1}{4K_p 3^{\frac{r}{2}-1}}$ for a constant $K_p$, we have

\begin{align}\label{eq:e}
\nonumber\E\left[\sup_{t_0\leq s \leq t} \abs{M(s)}^{\frac{r}{2}}\right]&\leq 2 K_p 3^{\frac{r}{2}-1} \E\bigg[ \int_{t_0}^t \abs{e(s^-)}^{2} \norm{\sigma(X_{s-})-\sigma_h(X^h_{s-})}^2 \d s\bigg]^{\frac{r}{4}} \\
\nonumber &+K_p 3^{\frac{r}{2}-1}\bigg(2 \E\bigg[ \int_{t_0}^t \abs{e(s^-)}^{2} \abs{\nu(X_{s-})-\nu_h(X^h_{s-})}^2 \d s\bigg]^{\frac{r}{4}} \\ 
\nonumber&+ \E\bigg[\int_{t_0}^t \abs{\nu(X_{s-})-\nu_h(X^h_{s-})}^4 \d s\bigg]^{\frac{r}{4}}\bigg) \\
\nonumber &\leq 2 K_p 3^{\frac{r}{2}-1}\E \bigg[\sup_{t_0\leq s \leq t} \abs{e(s^-)}^{2}\int_{t_0}^t \norm{\sigma(X_{s-})-\sigma_h(X^h_{s-})}^2 \d s\bigg]^{\frac{r}{4}}\\
\nonumber &+K_p 3^{\frac{r}{2}-1}\bigg(2 \E\bigg[ \sup_{t_0\leq s \leq t} \abs{e(s^-)}^{2} \int_{t_0}^t \abs{\nu(X_{s-})-\nu_h(X^h_{s-})}^2 \d s\bigg]^{\frac{r}{4}} \\ 
\nonumber &+ \E\bigg[\int_{t_0}^t \abs{\nu(X_{s-})-\nu_h(X^h_{s-})}^4 \d s\bigg]^{\frac{r}{4}}\bigg) \\
\nonumber &\leq \frac{1}{4}\E\left[ \sup_{t_0\leq s \leq t}\abs{e(s^-)}^{r}\right] + \frac{1}{2\delta}(t-t_0)^{\frac{r}{2}-1}\E \left[\int_{t_0}^t \norm{\sigma(X_{s-})-\sigma_h(X^h_{s-})}^r \d s\right]\\
\nonumber &+ \frac{1}{4}\E\left[ \sup_{t_0\leq s \leq t}\abs{e(s^-)}^{r}\right] + \frac{1}{2\delta}(t-t_0)^{\frac{r}{2}-1}\E \left[\int_{t_0}^t \abs{\nu(X_{s-})-\nu_h(X^h_{s-})}^r \d s\right]\\
&+ K_p 3^{\frac{r}{2}-1} (t-t_0)^{\frac{r}{4}-1}\E\bigg[\int_{t_0}^t \abs{\nu(X_{s-})-\nu_h(X^h_{s-})}^r \d s\bigg],
\end{align}
where $r\geq 4$. We utilize the following inequality to bound the right hand side of equation \eqref{eq:e}.
\begin{align*}
\E \left[\int_{t_0}^t \norm{\sigma(X_{s-})-\sigma_h(X^h_{s-})}^r \d s\right]\leq c2^{r-1}\bigintss_{t_0}^t \left(\E \left[\abs{e(s^-)}^{r}\right]+\E \left[\abs{(1+\abs{X^h_{s-}}^{q^\prime})\h^2}^{\frac{r}{2}}\right]\right) \d s,
\end{align*}
and
\begin{align*}
\E \left[\int_{t_0}^t \abs{\nu(X_{s-})-\nu_h(X^h_{s-})}^r \d s\right]\leq c2^{r-1}\bigintss_{t_0}^t \left(\E \left[\abs{e(s^-)}^{r}\right]+\E \left[\abs{(1+\abs{X^h_{s-}}^{q^\prime})\h^2}^{\frac{r}{2}}\right]\right) \d s.
\end{align*}
Then
\begin{align*}
\E\left[ \sup_{t_0\leq s \leq t} \abs{M(s)}\right]&\leq \frac{1}{2}\E\left[ \sup_{t_0\leq s \leq t}\abs{e(s^-)}^{r}\right] + K \int_{t_0}^t \E\left[ \abs{e(s^-)}^{r}\right]\d s + K \h^r,
\end{align*}
in which we apply the boundedness of moments for the solution of SDE \eqref{eq:perSDE} \citep[Corollary 1]{higham2005numerical}.
We have
\begin{align*}
\E\left[ \sup_{t_0\leq s \leq t} \abs{e(s^-)}^{r}\right]&\leq  K \bigintsss_{t_0}^t \E\left[ \sup_{t_0\leq u \leq s}\abs{e(u^-)}^{r}\right]\d s + O(\h^r).
\end{align*} 
Now the result is concluded by the Gr\"{o}nwall inequality for $r\geq 4$ and the same reason as Theorem \eqref{theorem:Euler} can improve it to $r\geq 2$.
\end{proof}
We are now prepared to state the required theorem to prove strong convergence of the MLMC split-step backward Euler method.

\begin{theorem}\label{theorem:SSBE}
For $r \geq 2$, the split-step backward Euler approximation satisfy
\begin{align*}
\E \left[\sup_{t_0\leq t \leq T}\abs{X_t - \bar{Z}_t}^r\right]= O(\h^{\frac{r}{2}}).
\end{align*}
\end{theorem}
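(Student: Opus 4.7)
The plan is to split the error through the perturbed SDE \eqref{eq:perSDE} and apply the two preceding lemmas. The key observation, already made in the paragraph following \eqref{eq:SSBEJump}, is that $\bar Z_t$ is exactly the continuous-time Euler--Maruyama approximation of the perturbed SDE $X^h$ with coefficients $\mu_h,\sigma_h,\nu_h$. Writing
\[
X_t - \bar Z_t = (X_t - X^h_t) + (X^h_t - \bar Z_t)
\]
and using $(a+b)^r\leq 2^{r-1}(a^r+b^r)$, it suffices to bound the $r$-th moment of the supremum of each piece separately.

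For the first piece, Lemma \ref{theorem:solution} gives
\[
\E\left[\sup_{t_0\leq t\leq T}|X_t-X^h_t|^r\right] = O(\h^{r}),
\]
which is negligible compared to the target rate $\h^{r/2}$. For the second piece I would invoke Lemma \ref{theorem:Euler}, but applied to the perturbed SDE \eqref{eq:perSDE} rather than to the original one. This is legitimate because the statement of Lemma \ref{theorem:Euler} only uses that the coefficients satisfy Assumption \ref{assumption:a} and that the discrete scheme has bounded moments of all orders. The paragraph around \eqref{eq:exact} asserts that $\mu_h,\sigma_h,\nu_h$ satisfy the analogous one-sided Lipschitz, polynomial-growth and global-Lipschitz inequalities with constants independent of $\h$, and bounded moments of $Z_n$ are furnished by \citep[Corollary~1]{higham2005numerical}. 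Hence the lemma yields
\[
\E\left[\sup_{t_0\leq t\leq T}|X^h_t-\bar Z_t|^r\right]=O(\h^{r/2}).
\]

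Adding the two bounds gives the claimed $O(\h^{r/2})$, since $\h^{r}\leq \h^{r/2}$ for $\h\leq 1$. The main obstacle that warrants care is making sure that the constants appearing when Lemma \ref{theorem:Euler} is applied to the perturbed SDE are indeed uniform in $\h$: one needs both that the one-sided Lipschitz constant and the polynomial-growth constant $q$ (as well as the Lipschitz constants for $\sigma_h,\nu_h$) are $\h$-independent, and that the moment bounds used inside the proof of Lemma \ref{theorem:Euler} apply uniformly to $Z_n$. Once these uniformity statements are checked, the rest of the argument is just the triangle inequality and the two lemmas, with no further Gr\"onwall or martingale estimates needed.
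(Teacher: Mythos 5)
Your proposal is correct and follows essentially the same route as the paper: the decomposition through the perturbed SDE \eqref{eq:perSDE}, Lemma \ref{theorem:solution} for $X-X^h$, and Lemma \ref{theorem:Euler} applied to the perturbed problem (justified by the bounded moments of the SSBE iterates from \citep[Lemma 4]{higham2005numerical}) for $X^h-\bar Z$. Your added remark about checking that the constants for $\mu_h,\sigma_h,\nu_h$ are uniform in $\h$ is a point the paper leaves implicit but is indeed the only thing that needs verifying.
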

\begin{proof}
In \citep[Lemma 4]{higham2005numerical}, it is proved that split-step backward Euler approximation have bounded moments. Now, since the SSBE is equivalent to the Euler–Maruyama method applied to the modified problem \eqref{eq:perSDE}, from Theorem \eqref{theorem:Euler} we have
\begin{align*}
\E \left[\sup_{t_0\leq t \leq T}\abs{X^h_t - \bar{Z}_t}^r\right]=O(\h^{\frac{r}{2}}).
\end{align*}
On the other hand,
\begin{align*}
{\abs{X_t - \bar{Z}_t}}^r \leq 2^{r-1}\left(\abs{X_t - X^h_t}^r + \abs{X^h_t - \bar{Z}_t}^r\right).
\end{align*}
Hence Theorem \eqref{theorem:solution} completes the proof.
\end{proof}

\section{MLMC with Split-step Backward Euler}\label{sec:MLMC}
This section contains the main work of this paper where inspired by \citep{hutzenthaler2013divergence}, using the SSBE method, we extend the convergence of the multilevel Monte-Carlo method for nonlinear SDEs to the jump-diffusion processes. In \citep{hutzenthaler2013divergence}, they showed that tamed Euler scheme for diffusion SDE is strongly convergent in the MLMC framework. Here, we prove the convergence of MLMC SSBE method not only for diffusion processes but also for jump-diffusion SDEs. To this end, assume that ${\Phi([t_0,T],\R^d)}$ is the space of nonlinear c\'{a}dl\'{a}g processes. Since every c\'{a}dl\'{a}g function on $[t_0,T]$ is bounded, the uniform norm is well defined for $x \in {\Phi([t_0,T],\R^d)}$ \citep{pollard1984uniform} such that
\begin{equation*}
\norm{x}_{\Phi([t_0,T],\R^d)}=\sup_{t_0\leq t \leq T}{\abs{x_t}}.
\end{equation*}
Duo to the c\'{a}dl\'{a}g property of processes satisfying SDE \eqref{eq:SDE} under Assumption (1), we know that $X_t, \bar{Z}_t \in {\Phi([t_0,T],\R^d)}$. 

Following theorem states the strong convergence of the MLMC SSBE method for path-dependent payoffs. To make the proof more understandable, we stressed the norms by indices from their space.

\begin{theorem}[Strong Convergence]\label{theorem:convergence}
Assume that $c^{\prime} \in [0,\infty)$ and $f:\Phi([t_0,T],\R^d) \rightarrow \R$ be a locally Lipschitz function satisfying
\begin{align}\label{eq:payoffcondition}
\nonumber \norm{f(x) - f(y)}&_{\Phi([t_0,T],\R^d)} \leq \\
& c^{\prime}(1 + \norm{x}_{\Phi([t_0,T],\R^d)}^{c^{\prime}} + \norm{y}_{\Phi([t_0,T],\R^d)}^{c^{\prime}})\norm{x-y}_{\Phi([t_0,T],\R^d)},\quad \forall x,y \in \Phi([t_0,T],\R^d).
\end{align}
Then for any $r\geq 2$ and given $\epsilon$, there exist values $L$ and $M_l$ for $l=1,\ldots, L$ such that
\begin{align}\label{eq:error}
\norm{\E[f(X)] - \frac{1}{M_0}\sum_{k=1}^{M_0}f(\bar{Z}^{0,k}) - \sum_{l=1}^{L}\frac{1}{M_l}\sum_{k=1}^{M_l}\left(f(\bar{Z}^{l,k})- f(\bar{Z}^{l-1,k})\right)}_{L_r(\Omega;\R)} \leq O(\epsilon).
\end{align}
\end{theorem}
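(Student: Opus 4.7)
The plan is to reduce the statement to the standard multilevel template (Proposition \ref{theorem:MLMC}) but worked out in the $L_r(\Omega;\R)$ norm rather than only in mean-square. I would first split the total error by the triangle inequality into a bias part and a centered stochastic part:
\begin{align*}
\bigl\|\E[f(X)] - \theta\bigr\|_{L_r(\Omega;\R)}
&\leq \bigl|\E[f(X)] - \E[f(\bar Z^L)]\bigr| + \bigl\|\theta - \E[\theta]\bigr\|_{L_r(\Omega;\R)},
\end{align*}
using that $\E[\theta]=\E[f(\bar Z^L)]$ thanks to the telescoping of the unbiased estimators $\theta_l$. This gives the familiar bias+statistical decomposition, but now with $L_r$ in place of $L_2$.

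For the bias term, I would apply the locally Lipschitz hypothesis \eqref{eq:payoffcondition} pathwise, then take expectation and use Hölder's inequality with some exponent $p>1$ together with its conjugate:
\begin{align*}
\bigl|\E[f(X)]-\E[f(\bar Z^L)]\bigr|
\leq c'\,\bigl\|1+\|X\|_\Phi^{c'}+\|\bar Z^L\|_\Phi^{c'}\bigr\|_{L_p}\,
\bigl\|\|X-\bar Z^L\|_\Phi\bigr\|_{L_{p/(p-1)}}.
\end{align*}
The first factor is finite and bounded uniformly in $L$ by the moment bounds on $X$ (given in the introduction) and on $\bar Z^L$ (Higham–Kloeden, \citep[Lemma 4]{higham2005numerical}). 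The second factor is of order $\h_L^{1/2}$ by Theorem \ref{theorem:SSBE} applied with $r=p/(p-1)$. So the bias is $O(\h_L^{1/2})$, and a choice $L\sim \log_2(\epsilon^{-2})$ renders it $O(\epsilon)$.

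For the stochastic term, I would first bound each level's contribution by Marcinkiewicz–Zygmund (since $\theta_l - \E[\theta_l]$ is an average of $M_l$ i.i.d.\ centered random variables):
\begin{align*}
\bigl\|\theta_l - \E[\theta_l]\bigr\|_{L_r(\Omega;\R)}
\leq C_r\, M_l^{-1/2}\,\bigl\|f(\bar Z^l)-f(\bar Z^{l-1})\bigr\|_{L_r(\Omega;\R)},\qquad l\geq 1,
\end{align*}
and analogously for $l=0$. To control the right hand side I would insert $f(X)$, use the locally Lipschitz condition \eqref{eq:payoffcondition} and Hölder as above, and invoke Theorem \ref{theorem:SSBE} twice (at levels $l$ and $l-1$) to obtain $\|f(\bar Z^l)-f(\bar Z^{l-1})\|_{L_r(\Omega;\R)}=O(\h_l^{1/2})$. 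Summing over $l$,
\begin{align*}
\bigl\|\theta-\E[\theta]\bigr\|_{L_r(\Omega;\R)}
\leq C\sum_{l=0}^{L} M_l^{-1/2}\,\h_l^{1/2}.
\end{align*}

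Finally I would balance the two contributions exactly as Giles does: pick $M_l$ proportional to $\epsilon^{-2}\h_l\,L$ (or use the variance-weighted optimal \eqref{eq:optimal}) so that each level of the statistical sum is $O(\epsilon/L)$, which yields the total stochastic error $O(\epsilon)$ and hence \eqref{eq:error}. The hardest step is the $L_r$ control of $\|f(\bar Z^l)-f(\bar Z^{l-1})\|_{L_r}$: because $f$ is only locally Lipschitz with polynomial weight of exponent $c'$, one must carry through higher moments of $\bar Z^l$ and $X$ at the correct order when splitting by Hölder; this is precisely where the general $r$-th moment version of Theorem \ref{theorem:SSBE}, rather than the usual $r=2$ strong estimate, is needed, and it is what motivates the upgrade of Lemma \ref{theorem:Euler} and Lemma \ref{theorem:solution} to arbitrary $r\geq 2$.
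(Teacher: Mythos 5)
Your proposal follows essentially the same route as the paper's proof: a triangle-inequality decomposition into bias plus per-level centered averages, a Marcinkiewicz--Zygmund/BDG bound supplying the $M_l^{-1/2}$ factor, the locally Lipschitz condition \eqref{eq:payoffcondition} combined with H\"older, the uniform moment bounds on $\bar{Z}$, and the $L_{2r}$ strong-convergence estimate of Theorem \ref{theorem:SSBE} (applied at levels $l$ and $l-1$ through $X$) to get $O(\h_l^{1/2})$ per level, followed by the choice of $L$ and $M_l$. The one quibble is the final calibration: to make each level's statistical contribution $O(\epsilon/L)$ you need $M_l \propto \epsilon^{-2}\h_l L^{2}$ (the paper takes $M_l = 16L^{2}\epsilon^{-2}(T-t_0)2^{-l}$), since with $M_l \propto \epsilon^{-2}\h_l L$ the sum over levels is only $O(\epsilon\sqrt{L})$.
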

\begin{proof}
 
We denote the left hand side of \eqref{eq:error} by $ESS$. Using the triangle inequality, we have
\begin{align*}
ESS &\leq\abs{\E[f(X)] - \E[f(\bar{Z}^{L,1})]} + \frac{1}{M_0} \norm{\sum_{k=1}^{M_0}\left(\E[f(\bar{Z}^{0,1})]-f(\bar{Z}^{0,k})\right)}_{L_r(\Omega;\R)}\\
&+ \sum_{l=1}^{L}\frac{1}{M_l}\norm{\sum_{k=1}^{M_l}\left(\E[f(\bar{Z}^{l,1})]-\E[f(\bar{Z}^{l-1,1})]-f(\bar{Z}^{l,k})+ f(\bar{Z}^{l-1,k})\right)}_{L_r(\Omega;\R)}\\
&\leq\E\left[\abs{f(X) - f(\bar{Z}^{L,1})}\right] + \frac{K_r}{\sqrt{M_0}}\norm{\E[f(\bar{Z}^{0,1})]-f(\bar{Z}^{0,1})}_{L_r(\Omega;\R)}\\
&+ \sum_{l=1}^{L}\frac{K_r}{\sqrt{M_l}}\norm{\E[f(\bar{Z}^{l,1})]-\E[f(\bar{Z}^{l-1,1})]-f(\bar{Z}^{l,1})+ f(\bar{Z}^{l-1,1})}_{L_r(\Omega;\R)},
\end{align*}
where $K_r$ comes from Burkholder–Davis–Gundy inequality \citep[Theorem 6.3.10]{stroock1993probability}. Now, from equation \eqref{eq:payoffcondition} and applying the triangle and H\"{o}lder inequalities, we obtain
\begin{align*}
ESS &\leq {c^{\prime}}\left(1 + \norm{X}_{L_{2{c^{\prime}}}(\Omega;\Phi([t_0,T],\R^d))}^{c^{\prime}} + \norm{\bar{Z}^{L,1}}_{L_{2{c^{\prime}}}(\Omega;\Phi([t_0,T],\R^d))}^{c^{\prime}}\right)
\norm{X - \bar{Z}^{L,1}}_{L_{2(\Omega;\Phi([t_0,T],\R^d))}}\\
&+ \frac{2 K_r}{\sqrt{M_0}}\norm{f(\bar{Z}^{0,1})}_{L_r(\Omega;\R)} + \sum_{l=1}^{L}\frac{2 K_r}{\sqrt{M_l}}\norm{f(\bar{Z}^{l,1})- f(\bar{Z}^{l-1,1})}_{L_r(\Omega;\R)}.
\end{align*}
Theorem \eqref{theorem:SSBE} and again equation \eqref{eq:payoffcondition} hence gives for some constant K
\begin{align*}
ESS &\leq 2c^{\prime}\left(1 + \sup_{M \in \mathbb{N}}\norm{\bar{Z}^{M,1}}_{L_{2c^{\prime}}(\Omega;\Phi([t_0,T],\R^d))}^{c^{\prime}}\right)K h_L^{\frac{1}{2}} + \frac{2 K_r}{\sqrt{M_0}}\norm{f(\bar{Z}^{0,1})}_{L_r(\Omega;\R)}\\
&+ \sum_{l=1}^{L}\frac{4 c^{\prime} K_r}{\sqrt{M_l}}\left(1 + \sup_{M \in \mathbb{N}}\norm{\bar{Z}^{M,1}}_{L_{2rc^{\prime}}(\Omega;\Phi([t_0,T],\R^d))}^{c^{\prime}}\right)\norm{\bar{Z}^{l,1} - \bar{Z}^{l-1,1}}_{L_{2r}(\Omega;\Phi([t_0,T],\R^d))}.
\end{align*}
Note that equation \eqref{eq:payoffcondition} implies that $\norm{f(x)}_{\Phi([t_0,T],\R^d)} \leq (2{c^{\prime}} + \norm{f(0)}_{\Phi([t_0,T],\R^d)})(1 + \norm{x}_{\Phi([t_0,T],\R^d)}^{{c^{\prime}}+1})$, for all $x$. Applying this, together with triangle inequality and convergence result of split-step method (Theorem \eqref{theorem:SSBE}), we obtain
\begin{align}\label{eq:K}
\nonumber ESS &\leq 2{c^{\prime}}\left(1 + \sup_{M \in \mathbb{N}}\norm{\bar{Z}^{M,1}}_{L_{2{c^{\prime}}}(\Omega;\Phi([t_0,T],\R^d))}^{c^{\prime}}\right)K \h_L^{\frac{1}{2}}\\
\nonumber &+ \frac{2 K_r}{\sqrt{M_0}}(2{c^{\prime}} + \norm{f(0)}_{\Phi([t_0,T],\R^d)})\bigg(1 + \norm{\bar{Z}^{0,1}}_{L_{r({c^{\prime}}+1)}(\Omega;\Phi([t_0,T],\R^d))}^{(c^{\prime}+1)}\bigg)\\
\nonumber &+ \sum_{l=1}^{L}\frac{4 c K_r}{\sqrt{M_l}}\left(1 + \sup_{M \in \mathbb{N}}\norm{\bar{Z}^{M,1}}_{L_{2r{c^{\prime}}}(\Omega;\Phi([t_0,T],\R^d))}^{c^{\prime}}\right)\norm{\bar{Z}^{l,1} - \bar{Z}^{l-1,1}}_{L_{2r}(\Omega;\Phi([t_0,T],\R^d))} \\
\nonumber &\leq 2{c^{\prime}}\left(1 + \sup_{M \in \mathbb{N}}\norm{\bar{Z}^{M,1}}_{L_{2{c^{\prime}}}(\Omega;\Phi([t_0,T],\R^d))}^{c^{\prime}}\right)K \h_L^{\frac{1}{2}}\\
\nonumber &+ \frac{2 K_r}{\sqrt{M_0}}(2{c^{\prime}} + \norm{f(0)}_{\Phi([t_0,T],\R^d)})\bigg(1 + \norm{\bar{Z}^{0,1}}_{L_{r({c^{\prime}}+1)}(\Omega;\Phi([t_0,T],\R^d))}^{(c^{\prime}+1)}\bigg)\\
&\sum_{l=1}^{L}\frac{8 {c^{\prime}} K_r}{\sqrt{M_l}}\left(1 + \sup_{M \in \mathbb{N}}\norm{\bar{Z}^{M,1}}_{L_{2r{c^{\prime}}}(\Omega;\Phi([t_0,T],\R^d))}^{c^{\prime}}\right)K \h_{l-1}^{\frac{1}{2}}\leq K\left(\sqrt{h_L} +\frac{1}{\sqrt{M_0}}+\sum_{l=1}^{L}\dfrac{\sqrt{\h_l}}{\sqrt{M_l}}\right),
\end{align}
for a large enough constant $K$, where we used boundedness of SSBE approximation in the last step.
To complete the proof, we need to make the right hand side of equation \eqref{eq:K} less than given $\epsilon$. We choose $L=\lceil \log_2{\big(9(T-t_0)\epsilon^{-2}\big)}\rceil$ such that $\sqrt{h_L}=\sqrt{{T-t_0}/{2^L}}<{\epsilon}/{3}$ and $M_0=\lceil 9 \epsilon ^{-2}\rceil$ which is the number of path simulations in level $0$ with only one time step, such that ${1}/{\sqrt{M_0}}<{\epsilon}/{3}$. The last requirement is to set $\sum_{l=1}^{L}{\sqrt{\h_l}}/{\sqrt{M_l}}<{\epsilon}/{3}$. Simply, by selecting $M_l={16 L^2 \epsilon^{-2} (T-t_0)}/{2^l}$ for $l=1, \ldots, L$, we have
\begin{align*}
\sum_{l=1}^{L}\dfrac{\sqrt{\h_l}}{\sqrt{M_l}}=\sum_{l=1}^{L}\dfrac{\sqrt{T-t_0}}{2^{\frac{l}{2}}\sqrt{M_l}}=\dfrac{\epsilon}{4}<\dfrac{\epsilon}{3}.
\end{align*}
\end{proof}

Note that in practice we work with $L_2$ space which means that $\epsilon$ is the root mean square error. In this case, we can directly use equation \eqref{eq:optimal} to calculate the optimal $M_l$.

\section{MLMC with backward Euler}\label{sec:BE}
The backward Euler method for SDE \eqref{eq:SDE} is defined by $Z_0 = X_0$ and
\begin{align}\label{eq:BEJump}
L_{n+1} = L_n + \mu(L_{n+1})\Delta t + \sigma(L_n)\Delta \W _n + \nu(L_n)\Delta \N _n.
\end{align}
As BE approximation is regarded as an $O(\h)$ perturbation of the SSBE solution, next theorem can be easily proved in an analogous way of the proof of \citep[Theorem 3]{higham2007strong} and Theorem \eqref{theorem:SSBE}.

\begin{theorem}\label{theorem:ImDr}
For $r \geq 2$, there exists a continuous-time extension of the backward Euler approximation \eqref{eq:BEJump} for which
\begin{align*}
\E \left[\sup_{t_0\leq t \leq T}\abs{X_t - \bar{L}_t}^r\right]= O(\h^{\frac{r}{2}}).
\end{align*}
\end{theorem}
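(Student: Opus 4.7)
My plan is to mirror the structure of the proof of Theorem \ref{theorem:SSBE}: construct a continuous-time extension $\bar{L}_t$ of the backward Euler iterates, couple it to the continuous-time SSBE process $\bar{Z}_t$ on the same Brownian path and compensated Poisson path, estimate the coupling error in $L_r(\Omega;\Phi([t_0,T],\R^d))$, and close the argument by the triangle inequality against Theorem \ref{theorem:SSBE}. The natural extension is
\begin{align*}
\bar{L}_t = L_n + \mu(L_{n+1})(t-t_n) + \sigma(L_n)(\W(t)-\W(t_n)) + \nu(L_n)(\N(t)-\N(t_n)), \quad t\in[t_n,t_{n+1}),
\end{align*}
which interpolates the drift using the implicit value $L_{n+1}$ while keeping the diffusion and jump coefficients explicit at $L_n$. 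Well-posedness of $L_{n+1}$ under Assumption \ref{assumption:a} follows, for $\h$ sufficiently small, from the standard monotone-operator argument based on the one-sided Lipschitz condition, and $L_r(\Omega)$-moment bounds for $L_n$ and $\bar{L}_t$ can be established exactly as in \citep{higham2007strong}.

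The heart of the argument is the discrete coupling. Writing $e_n := L_n - Z_n$ with SSBE variables $Z_n$ and $Z_n^* = Z_n + \mu(Z_n^*)\h$ driven by the same noise, I would introduce the auxiliary quantity $\tilde{L}_{n+1} := L_{n+1}-\mu(L_{n+1})\h$, which satisfies $\tilde{L}_{n+1} = L_n + \sigma(L_n)\Delta\W_n + \nu(L_n)\Delta\N_n$ and is structurally the BE analogue of the SSBE second substep $Z_{n+1}=Z_n^*+\sigma(Z_n^*)\Delta\W_n+\nu(Z_n^*)\Delta\N_n$. Comparing $\tilde{L}_{n+1}-Z_{n+1}$, I would square it and condition on $\F_{t_n}$: the Brownian and compensated-Poisson cross-terms vanish on expectation, the squared-increment terms contribute $\h\norm{\sigma(L_n)-\sigma(Z_n^*)}^2$ and $\lambda\h\abs{\nu(L_n)-\nu(Z_n^*)}^2$, each Lipschitz in $\abs{L_n-Z_n^*}\leq\abs{e_n}+\h\abs{\mu(Z_n^*)}$. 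The one-sided Lipschitz inequality enters when relating $\tilde{L}_{n+1}-Z_{n+1}$ back to $e_{n+1}=L_{n+1}-Z_{n+1}$ via the identity $e_{n+1}=(\tilde{L}_{n+1}-Z_{n+1})+\mu(L_{n+1})\h$ combined with $\langle a-b,\mu(a)-\mu(b)\rangle\leq c\abs{a-b}^2$ to absorb the implicit drift term. A discrete Gr\"{o}nwall argument, together with the BDG inequality for the running supremum, then gives $\E[\max_n\abs{e_n}^r]=O(\h^{r/2})$.

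Passage from the grid to $\bar{L}_t$ is by the same within-step estimate as in \eqref{eq:poisInc}: the drift piece contributes at most $\h^r\abs{\mu(L_{n+1})}^r$, which is $O(\h^r)$ in $L_r$ by the polynomial growth of $\mu$ and the moment bounds, while the Brownian and compensated-Poisson pieces each deliver $O(\h^{r/2})$ via BDG applied to their respective stochastic integrals (the predictable quadratic variation of the compensated Poisson integral is $\lambda\abs{\nu}^2 ds$, of the same order as the Brownian one). Combining with the grid bound yields $\E[\sup_t\abs{\bar{L}_t-\bar{Z}_t}^r]=O(\h^{r/2})$, and the triangle inequality together with Theorem \ref{theorem:SSBE} delivers the claimed rate. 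The main obstacle is the jump term in the coupling: a raw Poisson increment can equal one on an event of probability $\lambda\h$, so path-by-path estimates are too crude; the compensation $\E[\Delta\N_n]=0$, together with BDG applied to the compensated Poisson integral, is what lets the jump contribution enter the Gr\"{o}nwall estimate at the desired $\h^{1/2}$ rate, exactly as in the earlier lemmas of this paper.
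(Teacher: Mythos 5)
Your overall strategy is exactly the one the paper intends: the paper gives no written proof of Theorem \ref{theorem:ImDr} beyond the remark that BE is an $O(\h)$ perturbation of the SSBE solution, to be handled as in \citep[Theorem 3]{higham2007strong} together with Theorem \ref{theorem:SSBE}, and your plan (couple BE to SSBE on the same Brownian and compensated Poisson increments, bound the coupling error in the supremum norm, conclude by the triangle inequality) is precisely that. The continuous-time extension and the within-step estimate mirroring \eqref{eq:poisInc} are fine; note only that your $\bar{L}_t$ is not adapted on $(t_n,t_{n+1})$ because it uses $\mu(L_{n+1})$, which is harmless for an $L_r$ error bound but means the BDG step must be applied to the adapted integrands $\sigma(L_n)$, $\nu(L_n)$, with the drift piece handled pathwise.

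The one step that would fail as written is the ``absorption'' of the implicit drift. The one-sided Lipschitz condition controls $\langle a-b,\mu(a)-\mu(b)\rangle$, i.e.\ it needs a \emph{difference} of drift evaluations; in your identity $e_{n+1}=(\tilde{L}_{n+1}-Z_{n+1})+\h\,\mu(L_{n+1})$ there is only a single $\mu$, with no matching evaluation point on the SSBE side, because the SSBE drift $\h\,\mu(Z_n^*)$ was applied at the start of the step (inside $Z_n^*$) rather than at the end. Adding and subtracting $\mu(Z_{n+1})$ leaves a residual $\h\,\langle e_{n+1},\mu(Z_{n+1})\rangle$, which after Young's inequality contributes $\h\,\E\big[|\mu(Z_{n+1})|^2\big]$ per step and hence $O(1)$ after summing $N=T/\h$ steps --- this destroys the rate. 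The standard repair is to use the one-sided Lipschitz condition through the map $G_\h(x)=x-\h\,\mu(x)$, which satisfies $\langle G_\h(a)-G_\h(b),a-b\rangle\geq(1-c\h)|a-b|^2$, so that its inverse $F_\h$ is Lipschitz with constant $(1-c\h)^{-1}$ for small $\h$. Since $L_{n+1}=F_\h(\tilde{L}_{n+1})$ and $Z_{n+1}^*=F_\h(Z_{n+1})$, comparing $L_{n+1}$ with the next SSBE \emph{star} point gives $|L_{n+1}-Z_{n+1}^*|\leq(1-c\h)^{-1}|\tilde{L}_{n+1}-Z_{n+1}|$, while $\tilde{L}_{n+1}-Z_{n+1}=(L_n-Z_n^*)+(\sigma(L_n)-\sigma(Z_n^*))\Delta\W_n+(\nu(L_n)-\nu(Z_n^*))\Delta\N_n$ closes a clean recursion for $d_n:=L_n-Z_n^*$ with $d_0=-\h\,\mu(Z_0^*)=O(\h)$ and no accumulating residue; one then recovers $L_n-Z_n=d_n+\h\,\mu(Z_n^*)=O(\h)$ and proceeds with your BDG and Gr\"{o}nwall plan exactly as sketched.
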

Note that $\bar{L}_t$ is defined from equation \eqref{eq:ctime}. Also, one can prove that the MLMC BE scheme is strongly convergent in a same manner of Theorem \eqref{theorem:convergence}.

\section{Numerical Expriments}\label{sec:Numerics}
In this section, we will price securities based on stochastic Ginzburg-Landau equation with multiplicative noise as the underlying asset. In \cite{hutzenthaler2013divergence}, the authors showed that MLMC Euler method is divergent for SDE \eqref{eq:GLJump}, now we implement both split-step and backward Euler algorithms to demonstrate their convergence in combination with the MLMC scheme.

\begin{example}
Let $X_t$ be the solution of 
\begin{equation}\label{eq:GLJump}
\d X_t = (2 X_t - X_t^3)\d t + 2 X_t \d \W(t) + X_t \d \N(t), \quad \lambda=1, \quad X_0=1, \quad 0\leq t \leq 1.
\end{equation}
These simulations have been repeated for $RMSE \in \{1e-1,5e-2,1e-2,5e-3\}$ to calculate the variance reduction, convergence order, the computational complexity and run time. As we can reuse already calculated paths when $L$ increases, we start with $L=2$ and allow the MLMC code to increase the levels if needed for $\mbox{Max level} = 10$.
In figure \eqref{fig:Max}, The results are shown for path-dependent payoff $f(x)= \sup_{t_0\leq t \leq 1}(x^2(t))$. Obviously, we can see better performance of SBBE scheme in the variance order and convergence rate, however the computational complexity and run time is less for BE method for small enough RMSE.\\
For $f(x)= \mbox{mean}_{t_0\leq t \leq 1}(x^2(t))$ in figure \eqref{fig:Mean}, superior results is an outcome of smoother payoff. The order of computational complexity for the MLMC SSBE method is $2$, which is as expected from Theorem \eqref{theorem:MLMC} for $\beta >1$. At the first glance we can see that MLMC BE algorithm terminates in $5$ levels, so it has less computational complexity and run time however the variance order is not impressive.

\begin{figure}[ht!]
\includegraphics[scale=1]{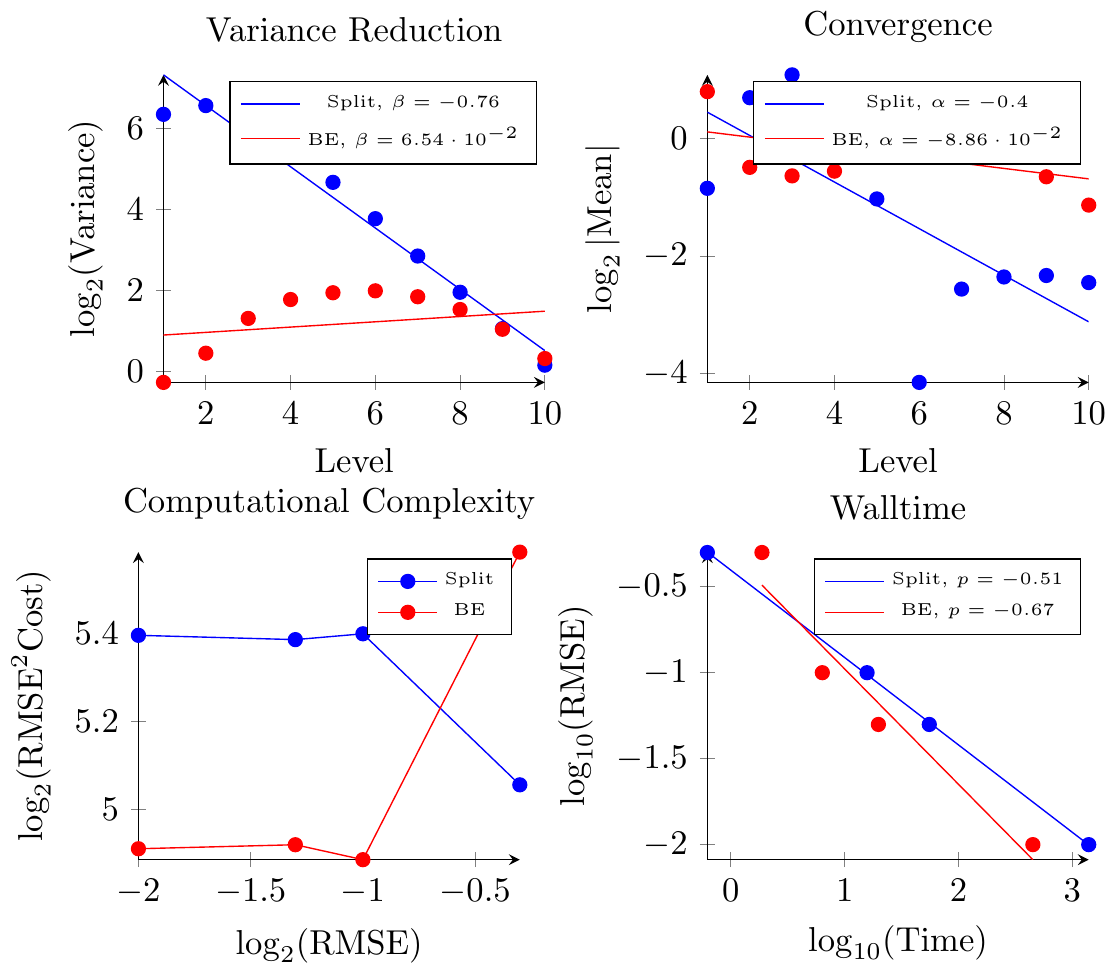}
\caption{$f(x)= \sup_{t_0\leq t \leq 1}(x^2(t))$.}\label{fig:Max}
\end{figure}

\begin{figure}[ht!]
\includegraphics[scale=1]{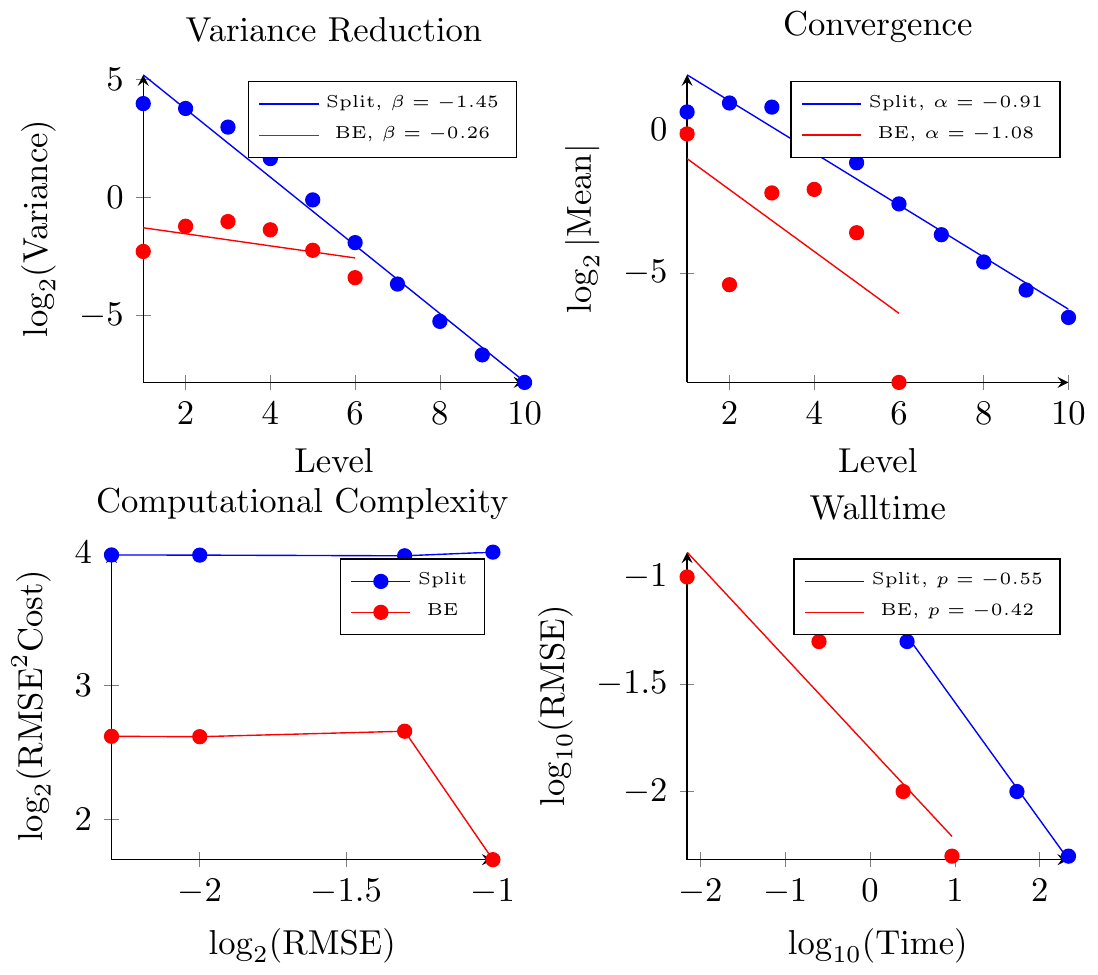}
\caption{$f(x)= \mbox{mean}_{t_0\leq t \leq 1}(x^2(t))$.}\label{fig:Mean}
\end{figure}
\end{example}
To compare The MLMC SSBE and MLMC BE methods to the MLMC tamed Euler, we chose the following equation by adding the jump term \citep{dareiotis2016tamed} to the \citep[Eq.(100)]{hutzenthaler2013divergence}, as the MLMC tamed Euler's results are not trust able for SDE \eqref{eq:GLJump} due to serious sensitivity to the multiplicative Brownian noise with coefficients more than $1$. This makes the algorithm does not work well in calculating the right price, besides it is also much time consuming.
\begin{example}
In one dimensional space $\R$, the equation is as follows.

\begin{equation}\label{eq:HJump}
\d X_t = (X_t - X_t^3)\d t + \d \W(t) + X_t \d \N(t), \quad \lambda=1, \quad X_0=0, \quad 0\leq t \leq 1.
\end{equation}

\begin{figure}[ht!]
\includegraphics[scale=1]{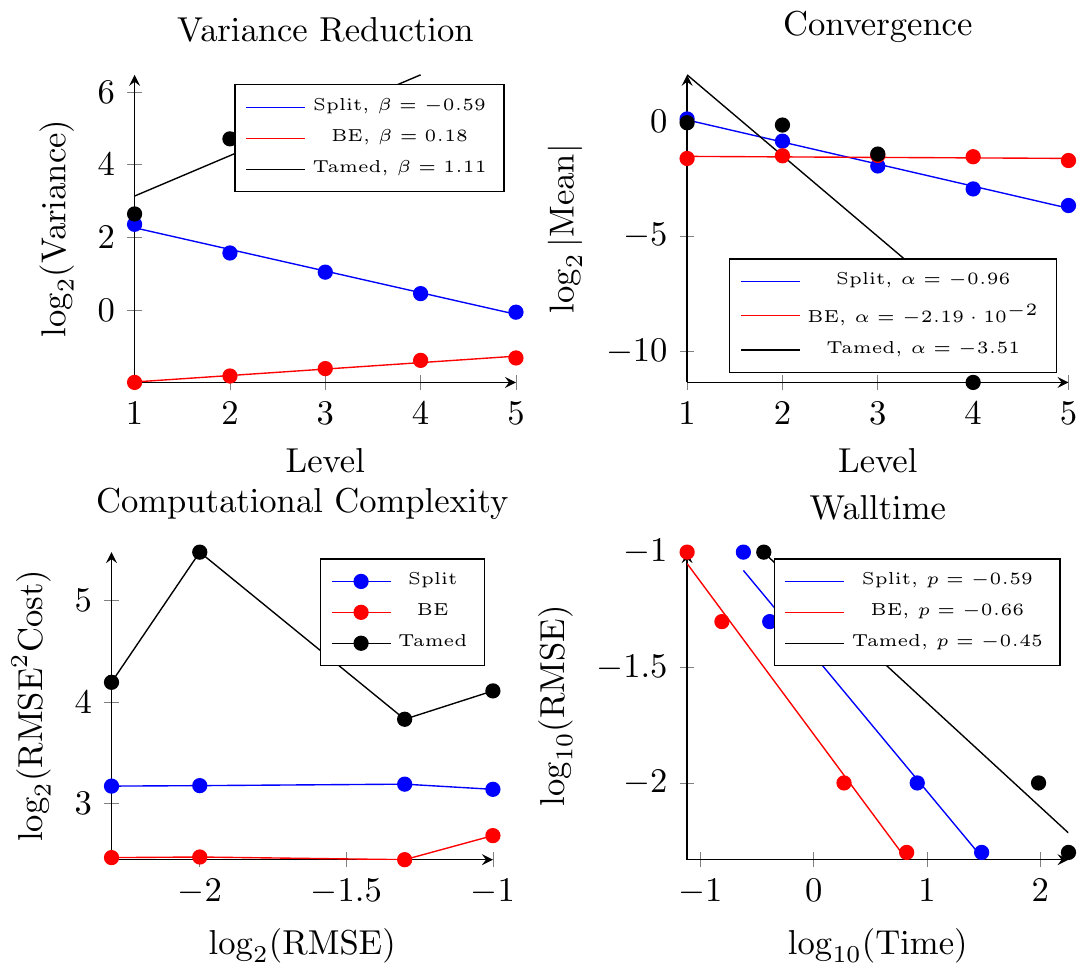}
\caption{$f(x)= \sup_{t_0\leq t \leq 1}(x^2(t))$.}\label{fig:CompareMax}
\end{figure}

\begin{figure}[ht!]
\begin{center}
\includegraphics[scale=1]{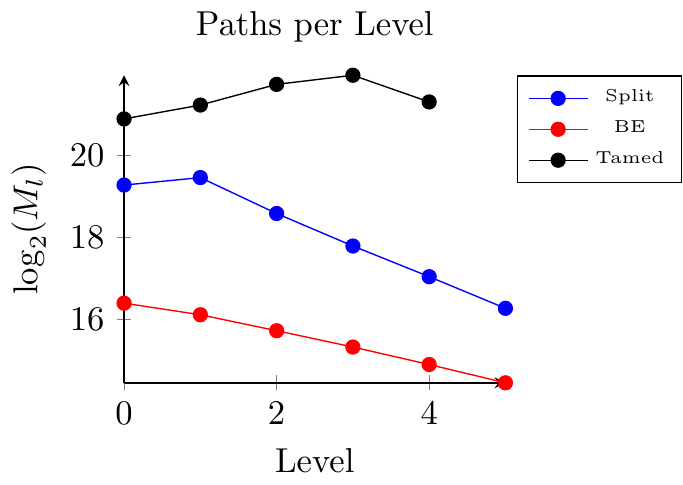}
\caption{Optimal Paths for $f(x)= \sup_{t_0\leq t \leq 1}(x^2(t))$, \mbox{RMSE}=$1e-2$ and \mbox{Max level}=$5$.}\label{fig:path}
\end{center}
\end{figure}
The results are shown for path-dependent payoff $f(x)= \sup_{t_0\leq t \leq 1}(x^2(t))$ and $\mbox{Max level}=5$ in figure \eqref{fig:CompareMax}. The tamed Euler method has more variance and by going through the levels the variance will increase. This fact corresponds to the needed paths shown in figure \eqref{fig:path}; clearly by growing the levels the optimal paths increase at first. We also see that the computational complexity and run time is much more for the tamed Euler scheme. The computational complexity order for the SSBE method is $2$ which is better than expected from Theorem \eqref{theorem:MLMC} for $\beta <1$. The reason can be that probably by increasing the Max level, it is possible to increase $\beta$. It is worth to mention that by selecting $\lambda=0$, we will have the diffusion model which again the results shows that MLMC SSBE is faster than MLMC tamed Euler scheme.
\end{example}
\newpage   
\section{Acknowledgements}
Azadeh Ghasemifard would like to thank Dr. Mohammad Taghi Jahandideh (Isfahan University of Technology) for his support and encouragement during this project.

\section{Appendix:Young inequality}
The following version of the Young inequality adopted from \citep{higham2002strong}.
\begin{itemize}
\item For all $a,b,\delta>0$ and $p,q>0$ satisfying $p^{-1}+q^{-1}=1$, we have
\begin{equation}\label{eq:Young1}
ab \leq \frac{\delta}{p}a^p + \frac{1}{q \delta^{\frac{q}{p}}}b^q,
\end{equation}
\end{itemize}
\newpage
\def\bibsection{\section*{References}}

\end{document}